\theoremstyle{plain}
  \newtheorem{theorem}{Theorem}[section]
  \newtheorem{corollary}[theorem]{Corollary}
  \newtheorem{proposition}[theorem]{Proposition}
  \newtheorem{main}{Main Theorem}
\theoremstyle{definition}
  \newtheorem{definition}[theorem]{Definition}
  \newtheorem{example}[theorem]{Example}
  \newtheorem{remark}[theorem]{Remark}
    \newtheorem{construction}[theorem]{Construction}
\newcommand{\op}{\mathrm{op}}
\newcommand{\CMS}{\mathbf{CMS}}
\newcommand{\QMS}{\mathbf{QMS}}
\newcommand{\CPS}{\mathbf{CPS}}
\newcommand{\QPS}{\mathbf{QPS}}
\newcommand{\BMS}{\mathbf{BMS}}
\newcommand{\CCMS}{\mathbf{CCMS}}
\newcommand{\BPS}{\mathbf{BPS}}
\newcommand{\CCPS}{\mathbf{CCPS}}
\newcommand{\bR}{\mathbb{R}}
\newcommand{\bC}{\mathbb{C}}
\newcommand{\cF}{\mathcal{F}}
\newcommand{\cB}{\mathcal{B}}
\newcommand{\cN}{\mathcal{N}}
\newcommand{\cC}{\mathcal{C}}
\newcommand{\cK}{\mathcal{K}}
\newcommand{\tr}{\mathrm{tr}}
\def\i<#1>{\langle #1 \rangle}
\def\l<#1>{\left\langle #1 \right\rangle}
\begin{document}

\author{Hitoshi Motoyama and Kohei Tanaka}
\address{College of Economics,
Aoyama Gakuin University, 4-4-25 Shibuya, Shibuya-ku, Tokyo 150-8366, Japan.}
\email{hitoshi@aoyamagakuin.jp}
\address{Institute of Social Sciences, School of Humanities and Social Sciences, Academic Assembly, Shinshu University, 3-3-1 Asahi, Matsumoto, Nagano 390-8621, Japan.}
\email{tanaka@shinshu-u.ac.jp}

\title {Classical and quantum conditional measures from a categorical viewpoint}
\thanks{This work was supported by the Sasakawa Scientific Research Grant and the Nagano Society for the Promotion of Science.
}

\maketitle

{\footnotesize 2010 Mathematics Subject Classification : 46L53, 18C10}

{\footnotesize Keywords: quantum probability, conditional measure, Gelfand duality}

\begin{abstract}
This paper presents categorical structures on classical measure spaces and quantum measure spaces in order to deal with canonical maps associated with conditional measures as morphisms. We extend the Riesz-Markov-Kakutani representation theorem and the Gelfand duality theorem to an equivalence of categories between them. From this categorical viewpoint, we introduce a quantum version of conditional measures as a dual concept of the classical one.
\end{abstract}

\section{Introduction}

This paper focuses on category structures on measure (probability) spaces, i.e.,
on morphisms between two given measure spaces.
Several researchers have already introduced notions of morphisms between measure or probability spaces \cite{IH92}, \cite{Gir82}, \cite{Lyn} for various purposes.

A natural approach is to define morphisms as measurable maps that preserve measures. Specifically, a morphism $f : (\Omega, \cF,\mu) \to (\Omega',\cF',\mu')$ between two measure spaces can be defined as a measurable map $f : (\Omega,\cF) \to (\Omega', \cF')$ satisfying $\mu(f^{-1}(A))=\mu'(A)$ for each $A \in \cF'$.
However, the above equality is too strict for the categorical treatment of morphisms. For example, let $B$ be a measurable subspace in a measure space $(\Omega, \cF,\mu)$. It yields the {\em conditional measure space} $(B, \cF_{B}, \mu_B)$ by restricting the original measure space onto $B$. Thus, it is equipped with the canonical inclusion $i : (B,\cF_{B}) \hookrightarrow (\Omega,\cF)$, but it does not preserve measures in general.

This paper aims to extend the class of measure-preserving maps to one containing such inclusions associated with conditional measures. Our approach is based on the notion of bounded liner operators on normed spaces. We introduce the concept of norm for measurable maps and the class of bounded measurable maps. The category $\CMS$ of measure spaces with bounded measurable maps contains canonical inclusions associated with conditional measure spaces as morphisms whose norm is $1$.

On the other hand, quantum probability theory was developed as an algebraic analog of classical probability theory \cite{HO07}, \cite{AO03}. We derive a category structure on quantum measure (probability) spaces with bounded homomorphisms, denoted by $\QMS$, similarly to the case of $\CMS$. A quantum measure space $(A,\varphi)$ consists of a $*$-algebra $A$ and a positive linear map $\varphi$. When $A$ is a commutative $C^*$-algebra, it can be expressed as a classical measure space by the Riesz-Markov-Kakutani (RMK) representation theorem and the Gelfand duality theorem.
This paper extends the two above-mentioned theorems to an equivalence of categories between full subcategories of $\CMS$ and $\QMS$.

\begin{main}[Theorem \ref{main1}]
The category of Borel measure spaces as a full subcategory of $\CMS$ is equivalent to the opposite category of commutative $C^*$-measure spaces as a full subcategory of $\QMS^{\op}$.
\end{main}

From the viewpoint of this duality, we provide a quantum version of conditional measure spaces. The classical conditioning on a measure space is essentially based on choosing subspaces and restricting measures on them. According to the duality, we define quantum conditioning as choosing ideals of an algebra and taking quotients by them.
Given a quantum measure space $(A,\varphi)$ with an ideal $I$, we establish a quantum measure on the quotient algebra $A/I$ using the Gelfand-Naimark-Segal (GNS) construction \cite{KR97}.
We call it the {\em quantum conditional measure} of $(A,\varphi)$ on $A/I$. The following theorem justifies it as a natural quantum analog of a classical conditional measure.

\begin{main}[Theorem \ref{main2}]
Any quantum conditional measure of a commutative $C^*$-measure space is isomorphic to the induced measure from a classical conditional measure in $\QMS_{\sharp}$.
\end{main}

The remainder of this paper is organized as follows.
The first part of Section 2 presents a category structure on classical measure spaces.
We define morphisms on measure spaces as bounded measurable maps, similarly to bounded liner operators on norms spaces. This is advantageous for dealing with conditional measures and describing normalized probabilities for measures in terms of adjoint functors.
The second part of Section 2 is a quantum analog of the first part, based on quantum probability theory.
In addition, it presents typical examples of quantum measure spaces.
The final part of Section 2 examines relationships between the categories of classical and quantum measure spaces. We extend the RMK representation theorem and the Gelfand duality theorem to an equivalence of categories between Borel measure spaces and commutative $C^*$-measure spaces.

Section 3 discusses conditioning in quantum measure spaces.
Classical conditional measures are defined by subspaces and restrictions, whereas quantum conditional measures are defined by ideals and quotients. In the commutative case, quantum conditional measures of $C^*$-measure spaces are essentially derived from classical conditional measures.


\section{Categories of classical and quantum measure spaces}

In this section, we investigate morphisms between measure (probability) spaces.
Several approaches have been adopted in this regard, such as measure-preserving maps \cite{IH92}, maps for statistics \cite{Gir82}, and measurable maps excluding measures \cite{Lyn}.
Here, we introduce another notion. For basic category theory, we refer the readers to Mac Lane's book \cite{Mac98}.

\subsection{Classical measure spaces and their category}

A measurable space $(\Omega, \cF)$ consists of a set $\Omega$ and a $\sigma$-field $\cF$ on $\Omega$. A (classical) {\em measure space} $(\Omega,\cF,\mu)$ consists of a measurable space $(\Omega, \cF)$ with a measure function $\mu : \cF \to \bR_{\geq 0}$. When $\mu(\Omega)=1$, we call it a {\em probability space}. Throughout this paper, we only deal with finite measure spaces.

\begin{definition}\label{classical_bounded}
Given two measure spaces $(\Omega, \cF,\mu)$ and $(\Omega',\cF',\mu')$, a measurable map $f : (\Omega,\cF) \to (\Omega',\cF')$ is {\em bounded} with respect to $\mu$ and $\mu'$ if there exists $M>0$ such that
\[
\mu(f^{-1}(A)) \leq M \mu'(A)
\]
for any $A \in \cF'$. In this case, we define
\[
|f|=\inf \{M>0 \mid \mu(f^{-1}(A)) \leq M \mu'(A), A \in \cF'\}
\]
and call it the {\em norm} of $f$. Furthermore, we say that $f$ is {\em measure-preserving} if
$\mu(f^{-1}(A)) = \mu'(A)$ for any $A \in \cF$. A measure-preserving map is obviously bounded with norm $1$.
\end{definition}

A bounded measurable map is an analog of bounded linear operators on normed spaces.
Let $\CMS$ denote the category of measure spaces and bounded measurable maps,
and let $\CMS_{\sharp}$ denote its subcategory of measure spaces and measure-preserving maps.

\begin{example}
Let $(\Omega,\cF,\mu)$ be a measure space.
The identity map $(\Omega,\cF,\mu) \to (\Omega,\cF,2\mu)$ is an isomorphism in $\CMS$ with norm $1/2$. Further, the inverse map is given by the identity $(\Omega,\cF,2\mu) \to (\Omega,\cF,\mu)$ with norm $2$.
\end{example}

\begin{example}\label{norm_1}
Let $(\Omega,\cF,\mu)$ be a measure space, and let $(B,\cF_{B},\mu_{B})$ be the conditional measure space for a subspace $B \in \cF$.
The inclusion $i : (B,\cF_B,\mu_B) \hookrightarrow (\Omega,\cF,\mu)$ is bounded with norm $1$.
\end{example}

Let $\CPS$ denote the full subcategory of $\CMS$ consisting of probability spaces.
The canonical normalization functor
\[
\cN : \CMS \to \CPS
\]
is given by $\cN(\Omega,\cF,\mu) = (\Omega, \cF, \mu/\mu(\Omega))$.
Note that, for a bounded measure map $f : (\Omega,\cF,\mu) \to (\Omega',\cF',\mu')$, we define $\cN f$ to be $f$ as a map; however, its norm is different from that of $f$:
\[
|\cN f| = \dfrac{\mu'(\Omega')}{\mu(\Omega)}|f|.
\]

\begin{proposition}\label{normalization}
The canonical inclusion functor $\CPS \to \CMS$ is left adjoint to the normalization functor $\cN$.
\end{proposition}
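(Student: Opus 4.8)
The plan is to realize the adjunction through a natural bijection of hom-sets,
\[
\CMS(\iota P, M) \;\cong\; \CPS(P, \cN M),
\]
natural in the probability space $P$ and the measure space $M$, where $\iota : \CPS \to \CMS$ denotes the inclusion. The decisive observation is that both $\iota$ and $\cN$ act as the identity on underlying measurable maps --- for $\cN$ this is exactly the convention fixed just before the statement --- so the two hom-sets can differ at most in their \emph{boundedness condition}. Writing $P=(\Omega_0,\cF_0,\nu)$ and $M=(\Omega,\cF,\mu)$, a measurable map $f:\Omega_0\to\Omega$ satisfies $\nu(f^{-1}(A))\le M_0\,\mu(A)$ for all $A\in\cF$ if and only if $\nu(f^{-1}(A))\le \bigl(M_0\,\mu(\Omega)\bigr)\,\mu(A)/\mu(\Omega)$ for all $A\in\cF$; that is, replacing $\mu$ by $\mu/\mu(\Omega)$ only rescales the admissible constant by the positive factor $\mu(\Omega)$ and leaves the class of bounded maps unchanged. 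Hence the two hom-sets coincide as sets of underlying maps, and I would take the desired bijection to be the identity on underlying maps.

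Naturality is then immediate and requires no computation: for $h:M\to M'$ the induced maps are post-composition by $h$ on the left and by $\cN h=h$ on the right, and for $k:P'\to P$ they are pre-composition by $\iota k=k$ and by $k$; in both cases the naturality square commutes on the nose because every functor in sight is the identity on underlying maps. Equivalently, one may package the data as a unit and counit. Since normalizing an already normalized measure changes nothing, $\cN\iota=\mathrm{id}_{\CPS}$ and the unit is the identity, while the counit $\varepsilon_M:\iota\cN M\to M$ is the identity map $(\Omega,\cF,\mu/\mu(\Omega))\to(\Omega,\cF,\mu)$, which is bounded of norm $1/\mu(\Omega)$. The triangle identities then reduce to compositions of identity maps, and the universal property --- that each bounded $f:\iota P\to M$ factors uniquely as $\varepsilon_M\circ\iota\bar f$ with $\bar f:P\to\cN M$ in $\CPS$ --- holds because $\varepsilon_M$ is the identity on underlying maps, forcing $\bar f=f$, which the first paragraph certifies to be a genuine $\CPS$-morphism.

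The argument is therefore entirely formal, and the one place deserving real care --- the step I would treat as the main obstacle --- is the well-definedness of the bijection, namely the equivalence of the two boundedness conditions above. This carries the implicit proviso that $\cN$ is defined only for measure spaces with $0<\mu(\Omega)<\infty$, so that the rescaling factor $\mu(\Omega)$ is a genuine positive number: finiteness is assumed throughout, and positivity is exactly what makes normalization meaningful. Every remaining verification follows from the fact that membership in each hom-set is determined by the underlying map rather than by its norm.
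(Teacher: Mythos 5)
Your proof is correct, and its engine is the same as the paper's: boundedness of a measurable map is insensitive to rescaling a measure by a positive constant, so the two relevant hom-sets agree as sets of underlying maps and the adjunction bijection can be taken to be the identity. There is, however, one respect in which your write-up is actually more faithful to the statement than the paper's own proof. You establish the bijection $\CMS(\iota P, M) \cong \CPS(P, \cN M)$, with the probability space in the \emph{domain}; under the standard convention this is precisely the assertion that $\iota$ is \emph{left} adjoint to $\cN$. The paper instead displays the bijection $\CMS((\Omega,\cF,\mu),(\Omega',\cF',P)) \cong \CPS((\Omega,\cF,\mu/\mu(\Omega)),(\Omega',\cF',P))$, with the probability space in the \emph{codomain}; read literally, this exhibits $\cN$ as left adjoint to $\iota$, i.e.\ the inclusion as the \emph{right} adjoint. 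No contradiction arises, because, as your counit computation shows, $\varepsilon_M : \iota\cN M \to M$ is the identity on underlying sets and hence an isomorphism in $\CMS$ (of norm $1/\mu(\Omega)$, with inverse of norm $\mu(\Omega)$); consequently $\iota$ and $\cN$ form an adjoint equivalence, and each functor is simultaneously left and right adjoint to the other, so both readings are true. Still, your orientation is the one that matches the proposition as stated. Two further merits of your account: you verify naturality and the triangle identities explicitly (the paper asserts naturality without comment), and you flag the tacit requirement $0 < \mu(\Omega) < \infty$ needed for $\cN$ to be defined at all, which the paper never makes explicit.
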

\begin{proof}
For a measure space $(\Omega,\cF,\mu)$ and a probability space $(\Omega',\cF',P)$,
a measurable map $f : (\Omega,\cF) \to (\Omega',\cF')$ with respect to $\mu$ and $P$ is bounded if and only if it is bounded with respect to $\mu/\mu(\Omega)$ and $P$.
Hence, the normalization functor yields a natural isomorphism:
\[
\cN : \CMS((\Omega,\cF,\mu),(\Omega',\cF',P)) \cong \CPS((\Omega,\cF,\mu/\mu(\Omega)),(\Omega',\cF',P)).
\]
\end{proof}

\subsection{Quantum measure spaces and their category}

Quantum probability theory was developed in the 1980s as an algebraic analog of classical probability theory \cite{HO07}, \cite{AO03}.
The commutative case can essentially be regarded as classical probability theory; hence, quantum probability theory is also referred to as non-commutative probability theory.
A quantum measure (probability) space is defined in purely algebraic terms.

\begin{definition}
A $*$-algebra $A$ is a $\bC$-algebra equipped with a $*$-operator. Throughout this paper, assume that a $*$-algebra $A$ has a unit $e \in A$.
Denote the set of positive elements of $A$ by $A_{+}=\{a^*a \mid a \in A\}$.
A $\bC$-homomorphism $\varphi : A \to \bC$ is called a {\em quantum measure} or is said to be {\em positive} if $\varphi(a) \geq 0$ for each $a \in A_{+}$.
A {\em quantum measure space} is a pair $(A,\varphi)$ of a $*$-algebra $A$ and a quantum measure $\varphi : A \to \bC$.
When $\varphi$ preserves the unit, i.e., $\varphi(e)=1$, we call it a {\em state} or an {\em expectation} on $A$.
A {\em quantum probability space} is a quantum measure space $(A,\varphi)$ in which $\varphi$ is a state on $A$.
\end{definition}

The class of quantum measure spaces admits a similar categorical structure to $\CMS$.

\begin{definition}
Given two quantum measure spaces $(A,\varphi)$ and $(A',\varphi')$,
a $*$-algebra homomorphism $f : A \to A'$ is said to be {\em bounded} with respect to
$\varphi$ and $\varphi'$ if there exists $M>0$ such that
\[
\varphi'(f(a)) \leq M \varphi(a)
\]
for any positive element $a \in A_{+}$. In this case, we define
\[
|f|=\inf \{M>0 \mid \varphi'(f(a)) \leq M \varphi(a), a \in A_+\}
\]
and call it the {\em norm} of $f$. Furthermore, $f$ is said to be {\em measure-preserving} if $\varphi'(f(a))=\varphi(a)$ for any $a \in A$. Let $\QMS$, $\QMS_{\sharp}$, and $\QPS$ denote the category of quantum measure spaces with bounded homomorphisms, the subcategory consisting of quantum measure spaces with measure-preserving homomorphisms, and the full subcategory consisting of quantum probability spaces, respectively.
\end{definition}

The quantum version of the normalization functor,
\[
\cN : \QMS \to \QPS,
\]
is given by $\cN(A,\varphi)=(A,\varphi/\varphi(e))$. Here, we use the same notation as that in the classical case. The following proposition can be shown similarly to Proposition \ref{normalization}.

\begin{proposition}
The canonical inclusion functor $\QPS \to \QMS$ is left adjoint to the normalization functor $\cN$.
\end{proposition}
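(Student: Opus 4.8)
The plan is to follow the pattern of Proposition~\ref{normalization}, but with the roles of source and target interchanged, so as to obtain the adjunction in the direction asserted, namely with the inclusion $\iota : \QPS \to \QMS$ as the \emph{left} adjoint. Concretely, I will fix a quantum probability space $P = (A_0, \psi)$, so that $\psi(e) = 1$, and a quantum measure space $(A, \varphi)$ with $\varphi(e) > 0$, and exhibit a bijection
\[
\QMS(\iota P, (A,\varphi)) \cong \QPS(P, \cN(A,\varphi))
\]
that is natural in both variables. Since $\iota$ is the identity on underlying data and $\cN(A,\varphi) = (A, \varphi/\varphi(e))$, both hom-sets consist of the same $*$-algebra homomorphisms $f : A_0 \to A$; they differ only in which boundedness condition is imposed.

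The key step is to record that these two boundedness conditions coincide. For a fixed $f : A_0 \to A$ and a constant $M > 0$, the estimate $\varphi(f(a)) \leq M \psi(a)$ for all $a \in (A_0)_{+}$ holds if and only if $(\varphi/\varphi(e))(f(a)) \leq (M/\varphi(e))\, \psi(a)$ for all $a \in (A_0)_{+}$, simply by dividing through by the positive scalar $\varphi(e)$. Hence $f$ is bounded with respect to $\psi$ and $\varphi$ exactly when it is bounded with respect to $\psi$ and $\varphi/\varphi(e)$, so the two hom-sets contain precisely the same homomorphisms. The norm is rescaled by the factor $1/\varphi(e)$, exactly as in the classical case, but this does not affect membership in the hom-set, since the norm is a derived quantity and not part of the morphism datum.

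The bijection is therefore the identity on underlying homomorphisms, and naturality in $P$ and in $(A,\varphi)$ is immediate: pre- and post-composition with morphisms of $\QMS$ or $\QPS$ acts only on the underlying homomorphism, which the identification leaves untouched, so every naturality square commutes on the nose. Equivalently, one may exhibit the unit and counit directly. Because $\cN\iota = \mathrm{id}_{\QPS}$, the unit at $P$ is the identity homomorphism of $A_0$; the counit at $(A,\varphi)$ is the identity homomorphism $\mathrm{id}_A : \iota\cN(A,\varphi) \to (A,\varphi)$, which is bounded of norm $\varphi(e)$ by the estimate above. The triangle identities then reduce to the facts that $\cN$ sends identities to identities and that $\cN\iota$ is the identity functor, so they hold trivially.

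I do not expect a genuine obstacle here; the only point requiring a word of care is the standing assumption $\varphi(e) > 0$, needed both to form $\varphi/\varphi(e)$ and to divide in the boundedness estimate. As in the classical setting this is harmless, since we work throughout with finite nonzero measures and the degenerate functional $\varphi = 0$ is excluded. Once the scaling equivalence of the two boundedness conditions is in hand, the adjunction is purely formal, which is precisely what the paper signals by asserting that it "can be shown similarly to Proposition~\ref{normalization}."
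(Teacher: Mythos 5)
Your proof is correct, and the underlying engine is the same as the paper's: boundedness of a homomorphism is invariant under rescaling either quantum measure by a positive scalar, so the two hom-sets consist of literally the same $*$-homomorphisms and the adjunction bijection is the identity on underlying maps. The one genuine difference is orientation, and it is worth noting: the paper's model argument (Proposition~\ref{normalization}) normalizes the \emph{source}, exhibiting $\CMS(X,\iota P')\cong\CPS(\cN X,P')$, which, read literally, is the hom-isomorphism for $\cN\dashv\iota$, i.e.\ for the inclusion as a \emph{right} adjoint; you instead normalize the \emph{target}, exhibiting $\QMS(\iota P,(A,\varphi))\cong\QPS(P,\cN(A,\varphi))$, which is exactly the left-adjoint direction the proposition asserts. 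Because the boundedness condition is scale-invariant on both sides, both adjunctions in fact hold ($\iota$ is a two-sided adjoint of $\cN$), so nothing is at stake mathematically --- but your version matches the statement as written, whereas a verbatim transcription of the paper's classical proof would not. Your explicit unit/counit check is also sound: $\cN\iota=\mathrm{id}_{\QPS}$ since a state is already normalized, the counit $\mathrm{id}_A:\iota\cN(A,\varphi)\to(A,\varphi)$ is bounded with norm $\varphi(e)$, and the triangle identities are trivial. Your caveat about $\varphi(e)>0$ is the right one to flag; one can add that for a positive functional the Cauchy--Schwarz inequality $|\varphi(a)|^2\le\varphi(e)\varphi(a^*a)$ shows $\varphi(e)=0$ forces $\varphi=0$, so only the zero functional is excluded, consistent with the paper's standing finiteness assumptions.
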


Let us recall some examples of quantum measure spaces.
It is well known that two types of commutative quantum measure spaces are induced from classical measure theory: $W^{*}$-measure spaces and $C^{*}$-measure spaces.
A quantum measure space $(A,\varphi)$ is called a $W^{*}$-measure (resp. $C^{*}$-measure) space when $A$ is a $W^{*}$-algebra (resp. $C^{*}$-algebra).

\begin{example}
Let $(\Omega,\cF,\mu)$ be a measure space.
Let $L^{\infty}(\Omega)$ be the $W^{*}$-algebra of essentially finite measurable functions $f : \Omega \to \bC$. It is equipped with a quantum measure $\varphi_{\mu}$ given by
\[
\varphi_{\mu}(f) = \int_{\Omega} f \ d \mu.
\]
The pair $(L^{\infty}(\Omega),\varphi_{\mu})$ is called the $W^{*}$-measure space associated with $(\Omega,\cF,\mu)$.
\end{example}

The $W^{*}$-algebra $L^{\infty}(\Omega)$ is commutative for a measure space $(\Omega,\cF,\mu)$. For the characteristic function
$\chi_{E}$ of $E \in \cF$ (given by $\chi_{E}(x)=1$ if $x \in E$ and $\chi_{E}(x)=0$ otherwise), we have $\varphi_{\mu}(\chi_{E})=\mu(E)$. Hence, $(L^{\infty}(\Omega),\varphi_{\mu})$ contains a considerable amount of statistical information regarding $(\Omega,\cF,\mu)$.

Another commutative example is $C^{*}$-measure spaces.

\begin{example}
Let $X$ be a compact Hausdorff space and let $\mu$ be a regular Borel measure on $X$.
We denote the $C^{*}$-algebra of continuous $\bC$-valued functions by $C(X)$. It is equipped with a quantum measure $\varphi_{\mu}$ given by
\[
\varphi_{\mu}(f) = \int_{X} f \ d \mu.
\]
The pair $(C(X),\varphi_{\mu})$ is called the $C^{*}$-measure space associated with $(X,\cB(X),\mu)$.
\end{example}

Conversely, for any quantum measure $\varphi$ on $C(X)$, the RMK representation theorem \cite{Rud87} determines a unique regular Borel measure $\mu$ on $X$ such that $\varphi_{\mu}=\varphi$.
Furthermore, for any commutative $C^*$-algebra $A$, the Gelfand duality theorem \cite{GN94} determines a unique compact space $X$ up to isomorphism such that $A \cong C(X)$.
The following fact follows from these two well-known theorems.

\begin{theorem}[Theorem 2.60 of \cite{AO03}]\label{RMK}
For a commutative $C^*$-measure space $(A,\varphi)$, there exists a regular Borel measure $\mu$ on a compact Hausdorff space $X$ such that $(A,\varphi) \cong (C(X),\varphi_{\mu})$ in $\QMS_{\sharp}$.
\end{theorem}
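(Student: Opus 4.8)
The plan is to combine the two classical theorems already recalled in the text with a short verification that the resulting identification respects the measures exactly, so that the isomorphism lives in $\QMS_{\sharp}$ rather than merely in $\QMS$. Since the deep content is supplied by Gelfand duality and the RMK representation theorem, the only genuinely new work is categorical bookkeeping.

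First I would apply the Gelfand duality theorem \cite{GN94} to the commutative $C^*$-algebra $A$, producing a compact Hausdorff space $X$ (the Gelfand spectrum of $A$) together with a $*$-isomorphism $\Phi : A \to C(X)$, the Gelfand transform. Because $\Phi$ is a $*$-isomorphism, it carries $A_{+}$ bijectively onto $C(X)_{+}$. Next I would transport the quantum measure along $\Phi$ by setting $\psi := \varphi \circ \Phi^{-1} : C(X) \to \bC$; as $\varphi$ is positive and $\Phi^{-1}$ preserves positive elements, $\psi$ is a positive linear functional on $C(X)$. I would then invoke the RMK representation theorem \cite{Rud87}, which yields a unique regular Borel measure $\mu$ on $X$ with $\psi(f) = \int_{X} f \, d\mu = \varphi_{\mu}(f)$ for all $f \in C(X)$. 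This identifies $\psi$ with the quantum measure $\varphi_{\mu}$ of the $C^*$-measure space $(C(X),\varphi_{\mu})$.

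Finally I would check that $\Phi$ is an isomorphism in $\QMS_{\sharp}$. By construction one has $\varphi_{\mu}(\Phi(a)) = \psi(\Phi(a)) = \varphi(\Phi^{-1}\Phi(a)) = \varphi(a)$ for every $a \in A$, so $\Phi$ is measure-preserving; symmetrically, $\varphi(\Phi^{-1}(g)) = \psi(g) = \varphi_{\mu}(g)$ shows that $\Phi^{-1}$ is measure-preserving as well. Hence $\Phi$ and its inverse are both morphisms in $\QMS_{\sharp}$, giving $(A,\varphi) \cong (C(X),\varphi_{\mu})$ in $\QMS_{\sharp}$. The step I expect to require the most care is precisely this last one: confirming that the Gelfand identification is measure-preserving in \emph{both} directions, so that the isomorphism descends to $\QMS_{\sharp}$ and not only to $\QMS$. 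This hinges on defining $\psi$ as the exact push-forward $\varphi \circ \Phi^{-1}$ rather than any rescaled functional, and on the uniqueness clause of the RMK theorem to guarantee that $\psi$ is represented by a single regular Borel measure.
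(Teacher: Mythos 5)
Your proposal is correct and follows exactly the route the paper intends: the paper states this result as a consequence of the Gelfand duality theorem and the RMK representation theorem (citing it as Theorem 2.60 of \cite{AO03} without further proof), and your argument—transporting $\varphi$ along the Gelfand transform $\Phi$, representing the push-forward $\varphi\circ\Phi^{-1}$ by a regular Borel measure via RMK, and verifying measure-preservation in both directions—is precisely the bookkeeping that derivation requires. Your added care about the isomorphism landing in $\QMS_{\sharp}$ rather than merely $\QMS$ is a welcome explicit check of what the paper leaves implicit.
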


On the other hand, the next two examples are typical non-commutative measure spaces.

\begin{example}
Let $M_{n}(\bC)$ be the $n$-th matrix algebra over $\bC$.
The trace $\tr : M_{n}(\bC) \to \bC$ is positive, and we call it the {\em trace measure} on $M_{n}(\bC)$. The {\em trace state} is the normalization of the trace measure given by $\tr(T)/n$ for $T \in M_{n}(\bC)$.
\end{example}

\begin{example}\label{vector_measure}
Let $B(H)$ be the algebra of bounded linear operators on a Hilbert space $H$.
Fix an object $h \in H$. The {\em vector measure} $w_{h} : B(H) \to \bC$ is defined by $\i<h,\eta h>$ for $\eta \in B(H)$.
When $|h|=1$, the vector measure $w_{h}$ is called the {\em vector state} on $B(H)$.
\end{example}

\subsection{Gelfand duality on categories of measure spaces}

We describe relations between classical and quantum measure spaces in terms of functors.
A measure space $(\Omega,\cF,\mu)$ is associated with a $W^*$-measure space $(L^{\infty}(\Omega), \varphi_{\mu})$, and a measurable map $f : (\Omega,\cF,\mu) \to (\Omega',\cF',\mu')$ induces a $\ast$-homomorphism $L^{\infty}f : L^{\infty}(\Omega') \to L^{\infty}(\Omega)$ by composition with $f$.

\begin{proposition}\label{L}
$L^{\infty} : \CMS \to \QMS^{\op}$ is a functor.
\end{proposition}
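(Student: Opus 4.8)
The plan is to verify the three defining conditions of a functor: well-definedness on objects; well-definedness on morphisms, together with the contravariant bookkeeping forced by the target $\QMS^{\op}$; and compatibility with identities and composition. The conceptual core, where the boundedness hypothesis of Definition~\ref{classical_bounded} does the real work, is the well-definedness on morphisms, and everything else is formal.

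First, on objects, I would check that $(L^{\infty}(\Omega),\varphi_{\mu})$ is a genuine quantum measure space: $L^{\infty}(\Omega)$ is a unital $*$-algebra under pointwise sum and product, complex conjugation as the $*$-operator, and the constant function $1$ as unit; and $\varphi_{\mu}(g)=\int_{\Omega}g\,d\mu$ is $\bC$-linear and positive, since a positive element $g=\overline{h}h=|h|^{2}\ge 0$ integrates to a nonnegative number. This is immediate.

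The main step is to show that a bounded measurable $f:(\Omega,\cF,\mu)\to(\Omega',\cF',\mu')$ induces a well-defined bounded homomorphism $L^{\infty}f:g\mapsto g\circ f$ from $L^{\infty}(\Omega')$ to $L^{\infty}(\Omega)$. The key observation is that the boundedness condition $\mu(f^{-1}(A))\le M\mu'(A)$ says exactly that the pushforward $f_{*}\mu$, defined by $f_{*}\mu(A)=\mu(f^{-1}(A))$, satisfies $f_{*}\mu\le M\mu'$, hence $f_{*}\mu\ll\mu'$. I would use this in three ways. (i) \emph{Descent to equivalence classes:} if $g=g'$ $\mu'$-a.e., the set $N$ where they differ is $\mu'$-null, and $\mu(f^{-1}(N))\le M\mu'(N)=0$, so $g\circ f=g'\circ f$ $\mu$-a.e.; thus $L^{\infty}f$ is well defined on classes. (ii) \emph{Essential boundedness:} if $|g|\le C$ $\mu'$-a.e., applying the same estimate to $\{|g|>C\}$ gives $|g\circ f|\le C$ $\mu$-a.e., so $g\circ f\in L^{\infty}(\Omega)$. (iii) \emph{Boundedness in $\QMS$:} for $g\in L^{\infty}(\Omega')_{+}$, change of variables yields $\varphi_{\mu}(g\circ f)=\int_{\Omega}g\circ f\,d\mu=\int_{\Omega'}g\,d(f_{*}\mu)\le M\int_{\Omega'}g\,d\mu'=M\varphi_{\mu'}(g)$, which is precisely the boundedness of $L^{\infty}f$ as a morphism of quantum measure spaces, with $|L^{\infty}f|\le|f|$. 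That $L^{\infty}f$ is a $*$-homomorphism preserving sums, products, scalars, conjugation, and the unit follows because precomposition with a fixed map respects all pointwise operations.

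Finally I would confirm functoriality. Precomposition with the identity is the identity, so $L^{\infty}$ sends identities to identities. For composable $f:(\Omega,\cF,\mu)\to(\Omega',\cF',\mu')$ and $f':(\Omega',\cF',\mu')\to(\Omega'',\cF'',\mu'')$ in $\CMS$, associativity of function composition gives $(g\circ f')\circ f=g\circ(f'\circ f)$ for $g\in L^{\infty}(\Omega'')$, i.e. the underlying homomorphisms satisfy $L^{\infty}f\circ L^{\infty}f'=L^{\infty}(f'\circ f)$ in $\QMS$; since composition in $\QMS^{\op}$ is that of $\QMS$ with the arrows reversed, this identity is exactly the functoriality law $L^{\infty}(f'\circ f)=L^{\infty}f'\circ L^{\infty}f$ in $\QMS^{\op}$. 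The only genuine obstacle is the well-definedness in (i) and (ii); once one recognizes that boundedness is precisely absolute continuity of $f_{*}\mu$ with constant $M$, all three uses of the hypothesis fall out of the single estimate $f_{*}\mu\le M\mu'$.
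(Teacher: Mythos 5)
Your proposal is correct and its core step (iii) — bounding $\varphi_{\mu}(g\circ f)\le M\varphi_{\mu'}(g)$ for positive $g$ via the measure inequality — is exactly the paper's proof, which you justify more carefully through the pushforward identity $\int_{\Omega}g\circ f\,d\mu=\int_{\Omega'}g\,d(f_{*}\mu)$. The additional verifications you supply (well-definedness on a.e.-equivalence classes, essential boundedness, and the identity/composition laws) are routine checks the paper silently omits, so the two arguments are essentially the same.
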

\begin{proof}
For a bounded morphism $f : (\Omega,\cF,\mu) \to (\Omega', \cF',\mu')$ in $\CMS$,
it suffices to verify that $L^{\infty}f : (L^{\infty}(\Omega'),\varphi_{\mu'}) \to (L^{\infty}(\Omega),\varphi_{\mu})$ is bounded in $\QMS$.
Note that a positive element in $L^{\infty}(\Omega')$ is a function taking non-negative real values.
Since there exists $M>0$ satisfying $\mu(f^{-1}(A)) \leq M \mu'(A)$ for each $A \in \cF'$, we have the following inequality for any positive function $g$:
\[
\varphi_{\mu}(L^{\infty}f(g))=\varphi_{\mu}(g \circ f) = \int_{\Omega} (g \circ f) \ d \mu \leq M \int_{\Omega'}g \ d \mu' = M \varphi_{\mu'}(g).
\]
\end{proof}

\begin{proposition}\label{norm_L}
A measurable map $f : (\Omega,\cF,\mu) \to (\Omega', \cF',\mu')$ on measure spaces is bounded if and only if $L^{\infty}f$ is bounded. In that case, $|f|=|L^{\infty}f|$.
\end{proposition}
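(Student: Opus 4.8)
The plan is to compare the two sets of admissible bounding constants directly; showing that they coincide settles both the equivalence of boundedness and the equality of norms at once. Write $S_c$ for the set of $M>0$ satisfying the classical inequality $\mu(f^{-1}(A)) \leq M\mu'(A)$ for all $A \in \cF'$, and $S_q$ for the set of $M>0$ satisfying the quantum inequality $\varphi_{\mu}(L^{\infty}f(g)) \leq M\varphi_{\mu'}(g)$ for all positive $g \in L^{\infty}(\Omega')$. By definition $|f| = \inf S_c$ and $|L^{\infty}f| = \inf S_q$, so it suffices to prove $S_c = S_q$ (with the convention that $f$, resp.\ $L^{\infty}f$, is bounded exactly when the corresponding set is nonempty).

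The inclusion $S_q \subseteq S_c$ is immediate from testing on indicator functions. For $A \in \cF'$ the characteristic function $\chi_A$ is a positive element of $L^{\infty}(\Omega')$, and since $\chi_A \circ f = \chi_{f^{-1}(A)}$ we have $\varphi_{\mu}(L^{\infty}f(\chi_A)) = \int_{\Omega} \chi_{f^{-1}(A)} \, d\mu = \mu(f^{-1}(A))$ and $\varphi_{\mu'}(\chi_A) = \mu'(A)$. Thus any $M \in S_q$, specialized to $g = \chi_A$, yields the classical inequality, whence $M \in S_c$. In particular this already gives $|f| \leq |L^{\infty}f|$.

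For the reverse inclusion $S_c \subseteq S_q$ I would reproduce the standard approximation argument underlying the proof of Proposition \ref{L}. Fix $M \in S_c$. For a nonnegative simple function $g = \sum_i c_i \chi_{A_i}$ with $c_i \geq 0$ and $A_i \in \cF'$, linearity of the integral together with the classical inequality gives $\int_{\Omega} (g \circ f) \, d\mu = \sum_i c_i \mu(f^{-1}(A_i)) \leq M \sum_i c_i \mu'(A_i) = M \int_{\Omega'} g \, d\mu'$. For a general positive $g \in L^{\infty}(\Omega')$—which, as noted in the proof of Proposition \ref{L}, is a nonnegative real-valued function—choose an increasing sequence of nonnegative simple functions converging pointwise to $g$ and apply the monotone convergence theorem to both $g \circ f$ and $g$; finiteness of the measures and essential boundedness of $g$ keep all integrals finite and let the inequality pass to the limit. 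Hence $M \in S_q$, giving $|L^{\infty}f| \leq |f|$.

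Combining the two inclusions yields $S_c = S_q$: the map $f$ is bounded if and only if this common set is nonempty, i.e.\ if and only if $L^{\infty}f$ is bounded, and in that case $|f| = \inf S_c = \inf S_q = |L^{\infty}f|$. The only step requiring genuine care is the passage from simple functions to arbitrary positive $L^{\infty}$-functions in the inclusion $S_c \subseteq S_q$; this is the main—though entirely routine—obstacle, and it is precisely the measure-theoretic content already invoked in establishing that $L^{\infty}$ is a functor.
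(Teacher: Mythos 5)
Your proof is correct and follows essentially the same route as the paper: the direction $S_q \subseteq S_c$ is exactly the paper's test on characteristic functions $\chi_A$, and the direction $S_c \subseteq S_q$ is the integral inequality from Proposition \ref{L}, which you merely make explicit via simple functions and monotone convergence. The reformulation in terms of the sets of admissible constants $S_c$ and $S_q$ is a clean packaging but not a different argument.
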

\begin{proof}
The proof of Proposition \ref{L} implies that $L^{\infty}f$ is bounded if $f$ is bounded, and $|L^{\infty}f| \leq |f|$.
Conversely, if $L^{\infty}f$ is bounded, then the characteristic function $\chi_{A}$ for $A \in \cF'$ induces the following inequality:
\[
\mu(f^{-1}(A))=\varphi_{\mu}(\chi_{f^{-1}(A)}) = \varphi_{\mu}(\chi_{A} \circ f) \leq |L^{\infty}f| \varphi_{\mu'}(\chi_{A}) =|L^{\infty}f|\mu'(A).
\]
This implies that $f$ is bounded, and $|f| \leq |L^{\infty}f|$.
\end{proof}

Next, we focus on the case of $C^{*}$-measure spaces associated with Borel measure spaces.
Let $\BMS$ denote the category of regular Borel measure spaces on compact Hausdorff spaces with bounded continuous maps, as a subcategory of $\CMS$. Similarly to the case of $L^{\infty}(-)$, the continuous function space $C(-)$ gives rise to a functor from $\BMS$ to $\QMS^{\op}$. It sends $(X,\cB(X),\mu)$ to $(C(X),\varphi_{\mu})$ and a bounded continuous map $f$ to $C(f)$ given by composition with $f$.

\begin{proposition}\label{C_functor}
$C : \BMS \to \QMS^{\op}$ is a functor.
\end{proposition}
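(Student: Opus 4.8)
The plan is to follow the proof of Proposition~\ref{L} almost verbatim, substituting $C$ for $L^{\infty}$ and using the continuity of morphisms in $\BMS$ exactly where measurability was used in $\CMS$. Three points must be checked: that $C(f)$ is a well-defined unital $\ast$-homomorphism, that it is bounded in $\QMS$, and that $C$ preserves identities and composition.

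First I would verify that for a bounded continuous map $f : (X,\cB(X),\mu) \to (X',\cB(X'),\mu')$ in $\BMS$, the rule $C(f)(g) = g \circ f$ indeed sends $C(X')$ into $C(X)$: since $f$ and $g$ are continuous, so is $g \circ f$, and as $X$ is compact Hausdorff this composite is automatically bounded and hence lies in $C(X)$. Composition with $f$ is linear, multiplicative, and unit-preserving, and it respects the involution because $\overline{g \circ f} = \bar{g} \circ f$; thus $C(f)$ is a unital $\ast$-algebra homomorphism. This is precisely the step where continuity of $f$, rather than mere measurability, is essential, since it is what guarantees $g \circ f \in C(X)$.

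Next I would establish boundedness, the only quantitative point. The positive elements of $C(X')$ are exactly the non-negative real-valued continuous functions, since $g = \bar{h}h = |h|^{2}$ with $h \in C(X')$ ranges over all of them. For such a $g$ and a constant $M$ witnessing the boundedness of $f$ (so that $\mu(f^{-1}(A)) \leq M\mu'(A)$ for all $A \in \cB(X')$), the pushforward $f_{*}\mu$, defined by $f_{*}\mu(A) = \mu(f^{-1}(A))$, satisfies $f_{*}\mu \leq M\mu'$, and integrating the non-negative function $g$ gives
\[
\varphi_{\mu}(C(f)(g)) = \int_{X} (g \circ f)\, d\mu = \int_{X'} g \, d(f_{*}\mu) \leq M \int_{X'} g \, d\mu' = M\varphi_{\mu'}(g).
\]
Hence $C(f)$ is a bounded homomorphism in $\QMS$ with $|C(f)| \leq |f|$, so it is a genuine morphism of $\QMS^{\op}$.

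Finally, functoriality is formal: $C(\mathrm{id}_{X})(g) = g \circ \mathrm{id}_{X} = g$ yields $C(\mathrm{id}) = \mathrm{id}$, and for $X \xrightarrow{f} X' \xrightarrow{h} X''$ in $\BMS$ the identity $C(h \circ f)(g) = g \circ h \circ f = C(f)(C(h)(g))$ shows $C(h \circ f) = C(f) \circ C(h)$ in $\QMS$, which is exactly covariant composition in $\QMS^{\op}$. I do not anticipate a genuine obstacle; the only steps demanding attention are the boundedness estimate and the correct identification of the positive elements of $C(X')$, both of which run parallel to Proposition~\ref{L}. Alternatively, one may observe that $C(X)$ is a $\ast$-subalgebra of $L^{\infty}(X)$ on which $\varphi_{\mu}$ restricts to the $C^{*}$-measure, so that $C$ is essentially induced from $L^{\infty}$ and the estimate can be inherited directly from Proposition~\ref{L}.
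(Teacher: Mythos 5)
Your proposal is correct and follows essentially the same route as the paper, which proves Proposition~\ref{C_functor} by repeating the argument of Proposition~\ref{L}: identify the positive elements of $C(X')$ as the non-negative continuous functions and verify the boundedness estimate $\varphi_{\mu}(g \circ f) \leq M \varphi_{\mu'}(g)$ by integration. Your additional checks (well-definedness of $C(f)$ via continuity, the pushforward justification of the change of variables, and the functoriality axioms) are details the paper leaves implicit, and your closing observation that $C$ inherits the estimate from $L^{\infty}$ is a valid shortcut consistent with the paper's intent.
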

\begin{proof}
The proof is similar to that of Proposition \ref{L}.
\end{proof}

\begin{proposition}\label{norm_C}
A continuous map $f : (X,\cB(X),\mu_{X}) \to (Y, \cB(Y),\mu_{Y})$ between Borel measure spaces is bounded if and only if $Cf$ is bounded. In that case, $|f|=|Cf|$.
\end{proposition}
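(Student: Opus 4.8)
The plan is to mirror the two-sided argument of Proposition~\ref{norm_L}, establishing $|Cf|\le|f|$ and $|f|\le|Cf|$ separately, but with one unavoidable modification: $C(Y)$, unlike $L^{\infty}(Y)$, does not contain the characteristic functions $\chi_{A}$ that made the converse in Proposition~\ref{norm_L} a one-line computation, so the approximation of those functions will be the crux.

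For $|Cf|\le|f|$ I would reuse the estimate from the proof of Proposition~\ref{C_functor}. A positive element of $C(Y)$ is a non-negative continuous function $g$, and for such a $g$ the boundedness of $f$ gives
\[
\varphi_{\mu_{X}}(Cf(g)) = \int_{X}(g\circ f)\,d\mu_{X} \le |f|\int_{Y}g\,d\mu_{Y} = |f|\,\varphi_{\mu_{Y}}(g),
\]
using that $\mu_{X}(f^{-1}(\cdot))\le|f|\,\mu_{Y}$ on Borel sets and approximating $g$ by simple functions. Hence $Cf$ is bounded with $|Cf|\le|f|$. Equivalently, since $C(Y)$ embeds in $L^{\infty}(Y)$, its positive elements lie among those of $L^{\infty}(Y)$, and $Cf$ is the restriction of $L^{\infty}f$ with the same quantum measures, so this inequality is immediate from Proposition~\ref{norm_L}.

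The converse $|f|\le|Cf|$ is where the real work lies, and it is the step I expect to be the main obstacle, since $\chi_{A}\notin C(Y)$ forces us to approximate and to invoke the regularity of the Borel measures. Fix a Borel set $A\subseteq Y$, a compact $K\subseteq A$, and an open $U\supseteq A$. As $Y$ is compact Hausdorff, Urysohn's lemma produces a continuous $g$ with $\chi_{K}\le g\le\chi_{U}$, and $g\in C(Y)$ is positive. Applying boundedness of $Cf$ to $g$ and sandwiching yields
\[
\mu_{X}(f^{-1}(K)) \le \int_{X}(g\circ f)\,d\mu_{X} = \varphi_{\mu_{X}}(Cf(g)) \le |Cf|\,\varphi_{\mu_{Y}}(g) \le |Cf|\,\mu_{Y}(U).
\]
For fixed $K$ the left-hand side is independent of $U$, so outer regularity of $\mu_{Y}$ lets me take the infimum over open $U\supseteq A$ and conclude $\mu_{X}(f^{-1}(K))\le|Cf|\,\mu_{Y}(A)$ for every compact $K\subseteq A$.

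Finally I would transfer this estimate from compact sets to $f^{-1}(A)$ itself using inner regularity of $\mu_{X}$ together with the continuity of $f$. For any compact $C\subseteq f^{-1}(A)$ the image $f(C)$ is compact and contained in $A$, and $C\subseteq f^{-1}(f(C))$; hence $\mu_{X}(C)\le\mu_{X}(f^{-1}(f(C)))\le|Cf|\,\mu_{Y}(A)$ by the previous step. Taking the supremum over such $C$ and applying inner regularity of $\mu_{X}$ to the Borel set $f^{-1}(A)$ gives $\mu_{X}(f^{-1}(A))\le|Cf|\,\mu_{Y}(A)$ for all Borel $A$, so $f$ is bounded with $|f|\le|Cf|$; combined with the first inequality this yields $|f|=|Cf|$. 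The delicate part throughout is precisely that neither measure gives direct access to $\chi_{A}$, so $f^{-1}(A)$ must be squeezed between a compact set and an open set, and it is the compactness of $X$ (making both $f^{-1}(K)$ and $f(C)$ compact) that keeps the passage between the two spaces legitimate.
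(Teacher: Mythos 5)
Your proof is correct, but for the crucial inequality $|f|\le|Cf|$ it takes a genuinely different route from the paper's. The paper approximates $\chi_{A}$ by a sequence of positive continuous functions $g_{n}\to\chi_{A}$ in $L^{2}(Y)$ and passes to the limit in $\varphi_{\mu_{X}}(g_{n}\circ f)\le|Cf|\,\varphi_{\mu_{Y}}(g_{n})$; you instead squeeze $A$ between a compact $K$ and an open $U$, produce a Urysohn function $\chi_{K}\le g\le\chi_{U}$, and eliminate the auxiliary sets using outer regularity of $\mu_{Y}$ and inner regularity of $\mu_{X}$, the latter transferred through $f$ via the inclusion $C\subseteq f^{-1}(f(C))$. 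Your version is longer but more robust: in the paper's limit argument the right-hand side converges ($\int_{Y}g_{n}\,d\mu_{Y}\to\mu_{Y}(A)$ follows from $L^{2}(\mu_{Y})$-convergence on a finite measure space), but the convergence of the left-hand side $\int_{X}(g_{n}\circ f)\,d\mu_{X}$ to $\mu_{X}(f^{-1}(A))$ is not justified there --- it would require control of the pushforward measure $f_{*}\mu_{X}$ relative to $\mu_{Y}$, which is essentially the statement being proved. Your sandwich argument never needs any such convergence: every step is an inequality between measures of concrete sets, and the regularity hypotheses (which the paper builds into $\BMS$ by restricting to regular Borel measures) are exactly what close the argument. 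In effect your last step also establishes that $f_{*}\mu_{X}$ is inner regular, which is the fact the paper's approximation argument implicitly relies on; so your proof can be read as a rigorous completion of the paper's sketch rather than merely an alternative to it.
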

\begin{proof}
If $f$ is bounded, then $Cf$ is bounded and $|Cf| \leq |f|$ by Proposition \ref{C_functor}.
The converse inequality is slightly different from that in the case of Proposition \ref{norm_L} because the characteristic map $\chi(A)$ is not continuous for $A \in \cB(Y)$ in general.
However, we can take a sequence of positive continuous functions $\{g_{n}\}$ on $Y$ converging to $\chi_{A}$ in $L^{2}(Y)$. If $Cf$ is bounded, then the inequality $\varphi_{\mu_{X}}(g_{n} \circ f) \leq |Cf| \varphi_{\mu_{Y}}(g_{n})$ for each $n$ induces
$\mu_{X}(f^{-1}(A)) \leq |Cf| \mu_{Y}(A)$ by $n \to \infty$. This implies that $f$ is bounded, and $|f| \leq |Cf|$.
\end{proof}

The Gelfand duality theorem involves the functor $C$ as an equivalence of categories between the category of compact Hausdorff spaces and the category of commutative $C^{*}$-algebras. Let us extend it to an equivalence between $\BMS$ and the category of commutative $C^*$-measure spaces, denoted by $\CCMS$, as a full subcategory of $\QMS$.

\begin{theorem}\label{main1}
The functor $C : \BMS \to \CCMS^{\op}$ is an equivalence of categories.
\end{theorem}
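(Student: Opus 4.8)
The plan is to verify the standard criterion that a functor is an equivalence of categories exactly when it is fully faithful and essentially surjective. Classical Gelfand duality already furnishes an equivalence, via $C$, between compact Hausdorff spaces and commutative $C^*$-algebras; the strategy is therefore to upgrade each of these three properties to the measure-enriched setting, leaning on the boundedness-comparison results above (Propositions \ref{C_functor} and \ref{norm_C}) together with Theorem \ref{RMK}. That $C$ is a functor is already in hand by Proposition \ref{C_functor}, so only the three defining properties remain.

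For essential surjectivity I would take an arbitrary object $(A,\varphi)$ of $\CCMS$ and apply Theorem \ref{RMK} directly: it produces a regular Borel measure $\mu$ on a compact Hausdorff space $X$ with $(A,\varphi) \cong (C(X),\varphi_{\mu})$ in $\QMS_{\sharp}$, hence a fortiori an isomorphism in $\CCMS^{\op}$. Since $(X,\cB(X),\mu)$ is an object of $\BMS$ and $C(X,\cB(X),\mu)=(C(X),\varphi_{\mu})$, the object $(A,\varphi)$ lies in the essential image of $C$.

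For faithfulness I would note that a morphism of $\BMS$ is a bounded continuous map whose underlying datum is an ordinary continuous map, and that $C$ acts on it precisely as the classical functor, by precomposition. As classical Gelfand duality is faithful on continuous maps between compact Hausdorff spaces, any two morphisms $f,g$ of $\BMS$ with $Cf=Cg$ coincide as continuous maps, and hence as morphisms of $\BMS$.

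Fullness is the step I expect to carry the real content. I would begin with a morphism in $\CCMS^{\op}$ from $C(X,\cB(X),\mu_X)$ to $C(Y,\cB(Y),\mu_Y)$, i.e. a bounded $*$-homomorphism $h:(C(Y),\varphi_{\mu_Y})\to(C(X),\varphi_{\mu_X})$. Forgetting the measures, classical Gelfand duality supplies a unique continuous map $f:X\to Y$ with $h=Cf$ as $*$-homomorphisms; such an $f$ is automatically Borel measurable. The only remaining point is that this $f$ is a legitimate morphism of $\BMS$, namely that it is bounded with respect to $\mu_X$ and $\mu_Y$, and this is exactly the reflection direction of Proposition \ref{norm_C}, which also gives $|f|=|Cf|=|h|<\infty$. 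Thus $f$ is a morphism of $\BMS$ with $Cf=h$, establishing fullness. The main obstacle is therefore isolated here: it is precisely the boundedness bookkeeping — recovering a bounded continuous map from a bounded homomorphism — and it is handled by the approximation of characteristic functions by continuous functions in $L^{2}$ underlying Proposition \ref{norm_C}.
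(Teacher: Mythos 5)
Your proposal is correct and follows essentially the same route as the paper's own proof: essential surjectivity via Theorem \ref{RMK}, faithfulness from classical Gelfand duality by forgetting measures, and fullness by lifting a bounded homomorphism to a continuous map via Gelfand duality and then invoking Proposition \ref{norm_C} to transfer boundedness. Your handling of the opposite-category direction of morphisms is in fact slightly more careful than the paper's wording, but the argument is the same.
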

\begin{proof}
It suffices to show the essential surjectivity and fully faithfulness of $C$ by Theorem 1 of Section 4.4 in \cite{Mac98}.
Theorem \ref{RMK} states that $C$ is essentially surjective. The faithfulness of $C$ follows immediately from the Gelfand duality theorem by ignoring measures. Moreover, for a bounded morphism $f : C(X) \to C(Y)$ in $\CCMS^{\op}$, there exists a continuous map $g : X \to Y$ between compact Hausdorff spaces such that $Cg=f$.
By Proposition \ref{norm_C}, we have $|g| = |Cg|=|f|<\infty$.
Hence, we can conclude that $g$ is a morphism in $\BMS$ and confirm the fullness of $C$.
\end{proof}

The functor $C$ can be restricted to $C : \BMS_{\sharp} \to \CCMS^{\op}_{\sharp}$, where ${}_{\sharp}$ denotes the subcategory consisting of the same objects and measure-preserving morphisms.
We can show that a measurable map $f$ between measure spaces preserves measure if and only if $Cf$ does by an argument similar to that in the proof of Proposition \ref{norm_C}. In addition, Theorem \ref{RMK} involves the essential surjectivity of the restricted functor $\BMS_{\sharp} \to \CCMS^{\op}_{\sharp}$ of $C$. Hence, we can obtain the following corollary.

\begin{corollary}
The functor $C$ induces an equivalence of categories between $\BMS_{\sharp}$ and $\CCMS^{\op}_{\sharp}$.
\end{corollary}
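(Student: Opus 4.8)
The plan is to invoke once more the criterion that a functor is an equivalence exactly when it is essentially surjective and fully faithful (Theorem 1 of Section 4.4 in \cite{Mac98}), applied now to the restriction $C : \BMS_{\sharp} \to \CCMS_{\sharp}^{\op}$. Two of the three conditions transfer with little work from Theorem~\ref{main1}. For essential surjectivity, Theorem~\ref{RMK} produces, for each commutative $C^*$-measure space $(A,\varphi)$, an isomorphism $(A,\varphi) \cong (C(X),\varphi_{\mu})$ in $\QMS_{\sharp}$; since such an isomorphism is by definition measure-preserving, it already exhibits $(A,\varphi)$ as isomorphic to an object in the image of the restricted functor inside $\CCMS_{\sharp}^{\op}$. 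Faithfulness is inherited immediately, since $\BMS_{\sharp}$ sits inside $\BMS$ with the same objects and a subcollection of the morphisms, on which $C$ is already faithful.

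The real content is fullness. Given a measure-preserving morphism $f : C(X) \to C(Y)$ in $\CCMS_{\sharp}^{\op}$, the fullness established in Theorem~\ref{main1} first yields a continuous map $g : X \to Y$ with $Cg=f$. What remains, and is the crux of the corollary, is to promote $g$ to a morphism of $\BMS_{\sharp}$, i.e.\ to show that $g$ is itself measure-preserving. Thus the whole statement reduces to the biconditional asserted in the paragraph preceding it: a continuous map $g : (X,\cB(X),\mu_X) \to (Y,\cB(Y),\mu_Y)$ is measure-preserving if and only if $Cg$ is.

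For the forward implication I would argue via the pushforward measure. If $g$ is measure-preserving, then $g_{*}\mu_X = \mu_Y$ as Borel measures on $Y$, whence for every $h \in C(Y)$
\[
\varphi_{\mu_X}(Cg(h)) = \int_{X} (h \circ g)\, d\mu_X = \int_{Y} h \, d(g_{*}\mu_X) = \int_{Y} h \, d\mu_Y = \varphi_{\mu_Y}(h),
\]
so $Cg$ is measure-preserving. The converse is where the analysis sits, and I would copy the approximation device from the proof of Proposition~\ref{norm_C}. Assuming $Cg$ measure-preserving, fix $A \in \cB(Y)$ and choose positive continuous functions $h_n$ on $Y$ with $h_n \to \chi_A$ in $L^{2}(Y)$; the equalities $\varphi_{\mu_X}(h_n \circ g) = \varphi_{\mu_Y}(h_n)$ valid at each stage then pass to the limit to give $\mu_X(g^{-1}(A)) = \mu_Y(A)$. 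The delicate point, and the step I expect to demand the most care, is justifying the limit on the $X$-side: one must check that $h_n \circ g \to \chi_{g^{-1}(A)}$ in a sense strong enough to carry the integrals along. This follows because the boundedness of $Cg$ (a consequence of its being measure-preserving, via Proposition~\ref{norm_C}), applied to the differences $|h_n - h_m|$, transports the $L^{2}(Y)$-convergence of $\{h_n\}$ into $L^{1}(X)$-convergence of $\{h_n \circ g\}$, whose limit is $\chi_{g^{-1}(A)}$. With the biconditional in hand, $g$ is measure-preserving, fullness follows, and the three conditions together yield the claimed equivalence between $\BMS_{\sharp}$ and $\CCMS_{\sharp}^{\op}$.
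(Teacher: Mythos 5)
Your proposal is correct and follows essentially the same route as the paper: the paper also deduces the corollary by combining the essential surjectivity supplied by Theorem \ref{RMK}, the fully faithfulness inherited from Theorem \ref{main1}, and the biconditional that a continuous map $f$ is measure-preserving if and only if $Cf$ is, established by the same approximation argument as in Proposition \ref{norm_C}. Your write-up merely fills in the details (pushforward computation, $L^2$-to-$L^1$ transfer via boundedness) that the paper leaves implicit in its one-paragraph justification.
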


We can also restrict the functor $C$ to probability spaces.
Let $\BPS$ (resp. $\BPS_{\sharp}$) denote the full subcategory of $\BMS$ (resp. $\BMS_{\sharp}$) consisting of probability spaces, and let $\CCPS$ (resp. $\CCPS_{\sharp}$) denote the full subcategory of $\CMS$ (resp. $\CCMS_{\sharp}$) consisting of quantum probability spaces.

\begin{corollary}
The functor $C$ induces an equivalence of categories between $\BPS$ (resp. $\BPS_{\sharp}$) and $\CCPS^{\op}$ (resp. $\CCPS_{\sharp}^{\op}$).
\end{corollary}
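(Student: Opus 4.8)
The plan is to deduce both equivalences by restricting the already-established equivalences $C : \BMS \to \CCMS^{\op}$ (Theorem \ref{main1}) and $C : \BMS_{\sharp} \to \CCMS^{\op}_{\sharp}$ (the preceding corollary) to the relevant full subcategories of probability spaces. The guiding principle is that $C$ matches the two normalization conditions exactly: a Borel measure space is a probability space precisely when the associated $C^{*}$-measure space is a quantum probability space. Once this correspondence of objects is in place, fully faithfulness is inherited for free from the ambient equivalence, and only essential surjectivity requires genuine checking.

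First I would verify that $C$ carries $\BPS$ into $\CCPS^{\op}$. For a Borel measure space $(X,\cB(X),\mu)$ we have $\varphi_{\mu}(e) = \int_{X} 1 \ d\mu = \mu(X)$, where $e$ denotes the unit of $C(X)$, i.e. the constant function $1$. Hence $\mu(X)=1$ holds if and only if $\varphi_{\mu}(e)=1$; that is, $(X,\cB(X),\mu) \in \BPS$ if and only if $(C(X),\varphi_{\mu}) \in \CCPS$. Thus $C$ restricts to a functor $\BPS \to \CCPS^{\op}$. Since $\BPS$ and $\CCPS$ are \emph{full} subcategories of $\BMS$ and $\CCMS$, respectively, their hom-sets coincide with those of the ambient categories, so the restriction is fully faithful as an immediate consequence of the fully faithfulness of $C$ established in Theorem \ref{main1}.

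It then remains to prove essential surjectivity. Let $(A,\varphi)$ be a commutative $C^{*}$-probability space, so that $\varphi$ is a state. By Theorem \ref{RMK} there is a regular Borel measure $\mu$ on a compact Hausdorff space $X$ together with an isomorphism $(A,\varphi) \cong (C(X),\varphi_{\mu})$ \emph{in $\QMS_{\sharp}$}, i.e. a measure-preserving one. Applying this unital, measure-preserving isomorphism to the unit and using that $\varphi$ is a state yields $\mu(X)=\varphi_{\mu}(e)=\varphi(e)=1$, so $(X,\cB(X),\mu)$ is a Borel probability space with $C(X,\cB(X),\mu)=(C(X),\varphi_{\mu}) \cong (A,\varphi)$. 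This places $(A,\varphi)$ in the essential image and completes the equivalence $\BPS \simeq \CCPS^{\op}$.

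For the measure-preserving variants the argument is identical, now restricting the equivalence $C : \BMS_{\sharp} \to \CCMS_{\sharp}^{\op}$; the object correspondence and fully faithfulness are unchanged, and essential surjectivity again follows from Theorem \ref{RMK}, whose isomorphism already lives in $\QMS_{\sharp}$. The one point deserving care — the main, if modest, obstacle — is precisely this reliance on the measure-preserving isomorphism: had Theorem \ref{RMK} only supplied an isomorphism in $\QMS$ rather than in $\QMS_{\sharp}$, the associated measure could be rescaled and we could not conclude $\mu(X)=1$. It is the normalization built into the measure-preserving isomorphism that transports the state condition on $(A,\varphi)$ to the probability condition on $(X,\cB(X),\mu)$.
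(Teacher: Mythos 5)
Your proof is correct and follows essentially the route the paper intends: the corollary is obtained by restricting the equivalence $C$ of Theorem \ref{main1} (and its measure-preserving variant) to the full subcategories of probability spaces, where the unit computation $\varphi_{\mu}(e)=\mu(X)$ matches the two normalization conditions, fullness and faithfulness are inherited because the subcategories are full, and essential surjectivity comes from Theorem \ref{RMK}, whose isomorphism is measure-preserving and hence carries the state condition to $\mu(X)=1$. The paper leaves exactly this restriction argument implicit, so your write-up is a faithful expansion of it.
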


\section{Quantum conditional measure}

A classical conditional measure is essentially based on subspaces and restriction of a measure space. By focusing on the duality between classical and quantum measure spaces, as we have seen in the last part of Section 2, considering ideals and quotients of an algebra is a natural way to formulate quantum conditional measures. Accordingly, for a quantum measure space $(A,\varphi)$ and a two-sided ideal $I$ (simply referred to as ``ideal'' throughout this paper) of $A$, we aim to construct a quantum measure $\varphi_I$ on the quotient algebra $A/I$.

To build such a positive linear map, we will use orthogonal decomposition of Hilbert spaces. First, let us recall the GNS construction, which is a technique for establishing a Hilbert space from a quantum measure space \cite{Arv76}, \cite{KR97}.

\begin{definition}\label{GNS}
For a quantum measure space $(A,\varphi)$, let $N_{\varphi}$ denote the left ideal of $A$ given by $\{a \in A \mid \varphi(a^{\ast}a)=0\}$. The quotient vector space $A/N_{\varphi}$  admits an inner product $\i<[a]_{\varphi},[b]_{\varphi}>=\varphi(a^{*}b)$.
The Hilbert space $H_{\varphi}$ is defined as the completion of $A/N_{\varphi}$ with respect to the above inner product.
Multiplication on $A$ induces an algebra map $\pi : A \to B(H_{\varphi})$ such that $\pi(a)[b]_{\varphi}=[ab]_{\varphi}$ for $a \in A$, $[b]_{\varphi} \in A/N_{\varphi}$.
The {\em cyclic vector} $\xi \in H_{\varphi}$ is defined as $[e]_{\varphi}$ for the unit $e$ of $A$.
The original quantum measure $\varphi$ on $A$ can be expressed as
$\varphi(a)= w_{\xi}(\pi(a))=\i<\xi,\pi(a)\xi>$ by using the vector measure on $B(H_{\varphi})$ in Example \ref{vector_measure}.
The triple $(H_{\varphi},\pi,\xi)$ is called the {\em GNS construction} associated with $(A,\varphi)$.
\end{definition}

\begin{construction} For a quantum measure space $(A,\varphi)$, and an ideal $I$ on $A$,
let us construct a positive linear map $\varphi_{I} : A/I \to \bC$ as follows.
Suppose that $(H_{\varphi},\pi,\xi)$ is the GNS construction associated with $(A,\varphi)$ in Definition \ref{GNS}.
Consider the composition of the canonical projection and the inclusion to the completion
\[
(-)_{\varphi} : A \to A/N_{\varphi} \hookrightarrow H_{\varphi}.
\]
Let $I_{\varphi} \subset H_{\varphi}$ denote the closure of the image of $I$ by the above map.
It is equipped with the orthogonal decomposition $H_{\varphi} = I_{\varphi} \oplus I_{\varphi}^{\perp}$.
We express the decomposition of a vector $x \in H_{\varphi}$ as $x_{I} + x_{I}^{\perp} \in I_{\varphi} \oplus I_{\varphi}^{\perp}$.
Define $\varphi_{I} : A/I \to \bC$ by $\varphi_{I}[a]=\i<\xi_{I}^{\perp},a_{\varphi}>$. This map is well defined, i.e., it does not depend on the choice of the representative element, since $a_{\varphi}=(a_{\varphi})_{I}$ if $a \in I$ and $\i<\xi_{I}^{\perp},(a_{\varphi})_{I}>=0$.
Moreover, it is positive by the following calculation:
\begin{equation}
\begin{split}
\varphi_{I}([a]^{*}[a]) &= \i<\xi_{I}^{\perp},(a^{\ast}a)_{\varphi}> \\ \notag
&=\i<\xi_{I}^{\perp},\pi(a^{*})a_{\varphi}> \\
&=\i<\pi(a)\xi_{I}^{\perp},a_{\varphi}> \\
&=\i<(a_{\varphi})^{\perp}_{I}, (a_{\varphi})_{I}+(a_{\varphi})_{I}^{\perp}> \\
&=\i<(a_{\varphi})^{\perp}_{I},(a_{\varphi})^{\perp}_{I}> \geq 0.
\end{split}
\end{equation}
\end{construction}

We call $\varphi_I$ the {\em quantum conditional measure} on $A/I$ induced from $\varphi$. It is equipped with the canonical projection $p : (A,\varphi) \to (A/I,\varphi_I)$.
This is not measure-preserving, but bounded with norm $1$ in $\QMS$.

\begin{example}
Let $(\Omega,\cF,\mu)$ be a measure space. The GNS construction associated with $(L^{\infty}(\Omega),\varphi_{\mu})$ designates the Hilbert space $L^2(\Omega)$.
Given a subspace $B \in \cF$, consider the conditional measure space $(B,\cF_{B},\mu_{B})$.
The inclusion $i : B \hookrightarrow \Omega$ induces a surjective homomorphism $i^{*} : L^{\infty}(\Omega) \to L^{\infty}(B)$ given by the restriction of functions.
We obtain an ideal $I$ of $L^{\infty}(\Omega)$ as the kernel $\mathrm{Ker} i^{*} \cong L^{\infty}(B^{c})$.
Further, $i^*$ induces an isomorphism $L^{\infty}(\Omega)/I \to L^{\infty}(B)$ by the homomorphism theorem. This can be extended to an isomorphism $(L^{\infty}(\Omega)/I, (\varphi_{\mu})_I)  \to (L^{\infty}(B), \varphi_{\mu_B})$ in $\QMS_{\sharp}$.

If $\mu=P$ is a probability measure and the ideal $I=L^{\infty}(B^c)$ for a subspace $B \in \cF$ with $P(B) \neq 0$, then the normalization of the quantum conditional measure coincides with the classical conditional expectation \cite{Rao05}:
\[
\cN((\varphi_P)_I)[f]=\dfrac{(\varphi_P)_I[f]}{(\varphi_P)_I[\chi(\Omega)]}=  \dfrac{1}{P(B)} \int_{B} (f_{|B}(w)) \ d P_B(w)=
\int_{\Omega} f(w) \ d P(w|B)  = E_B(f).
\]
\end{example}

\begin{example}\label{C}
Let $(X,\cB(X),\mu)$ be a Borel measure space on a compact Hausdorff space $X$.
The GNS construction associated with $(C(X),\varphi_{\mu})$ designates the Hilbert space $L^2(X)$.
Given a closed subspace $B \in \cB(X)$, consider
the conditional measure space $(B,\cB(B),\mu_{B})$ on $B$.
Note that the induced homomorphism $i^{*} : C(X) \to C(B)$ from the inclusion $i : B \hookrightarrow X$ is not surjective in general. Let $I$ denote the kernel of $i^*$, which is an ideal of $C(X)$.
We have the quantum conditional measures $(\varphi_{\mu})_{I}$ on $C(X)/I$ and $\varphi_{\mu_B}$ on $C(B)$. Further, $i^*$ induces the following injection:
\[
C(X)/I \cong \mathrm{Im}i^* \hookrightarrow C(B).
\]
This is a measure-preserving homomorphism $(C(X)/I, (\varphi_{\mu})_I) \to (C(B), \varphi_{\mu_B})$ in $\QMS_{\sharp}$.
\end{example}

Conversely, every quantum conditional measure on a commutative $C^*$-measure space is essentially derived from classical conditional measures. To formulate it categorically, fix a $C^*$-measure space $(A,\varphi)$ and the associated Borel measure space $(X,\cB(X),\mu)$.
For a closed subspace $B$ in $X$, the inclusion induces an algebra homomorphism $C(X) \to C(B)$.
Let $I_B$ denote the kernel of this map, which is a closed ideal of $C(X)$.

\begin{theorem}\label{main2}
For any closed ideal $I$ of $A$, there exists a closed subspace $B$ in $X$ such that
$(A/I, \varphi_I) \cong (C(B), \varphi_{\mu_B})$ in $\QMS_{\sharp}$.
\end{theorem}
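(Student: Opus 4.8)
The plan is to transport everything to the concrete model $(A,\varphi)=(C(X),\varphi_{\mu})$ supplied by Theorem \ref{RMK}, to identify the closed ideal $I$ as the vanishing ideal of a closed subset, and then to compute the quantum conditional measure $\varphi_{I}$ directly from the GNS construction of the Construction above.

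First I would use Theorem \ref{RMK} to assume $A=C(X)$ and $\varphi=\varphi_{\mu}$ for a regular Borel measure $\mu$ on the compact Hausdorff space $X$. Next comes the hull--kernel classification of closed ideals of a commutative $C^{*}$-algebra: putting
\[
B=\{x\in X \mid f(x)=0 \text{ for all } f\in I\},
\]
a closed subset of $X$, one has $I=\{f\in C(X)\mid f|_{B}=0\}$, which is just Gelfand duality applied to the commutative $C^{*}$-algebra $C(X)/I$. The Tietze extension theorem then makes the restriction homomorphism $i^{*}:C(X)\to C(B)$ surjective with kernel exactly this $I$, so $i^{*}$ descends to a $*$-isomorphism $C(X)/I\cong C(B)$ sending $[a]$ to $a|_{B}$ (in particular the injection of Example \ref{C} is onto). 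This is the algebra isomorphism underlying the claim; it remains to match the two measures across it.

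Second, I would feed this into the GNS computation. By Example \ref{C} the GNS construction of $(C(X),\varphi_{\mu})$ realizes $H_{\varphi}=L^{2}(X,\mu)$, with $(-)_{\varphi}$ the inclusion $C(X)\hookrightarrow L^{2}(X,\mu)$ and cyclic vector $\xi=\mathbf{1}$, the constant function. The key point is to identify $I_{\varphi}$, the $L^{2}$-closure of the image of $I$, as
\[
I_{\varphi}=\{g\in L^{2}(X,\mu)\mid g=0 \ \mu\text{-a.e. on } B\},
\]
whence $I_{\varphi}^{\perp}=\{g\mid g=0 \ \mu\text{-a.e. on } X\setminus B\}$ and the orthogonal projection of $\xi=\mathbf{1}$ onto $I_{\varphi}^{\perp}$ is $\xi_{I}^{\perp}=\chi_{B}$. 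The inclusion $\subseteq$ is immediate, since every element of $I$ already vanishes on $B$ and this condition is closed in $L^{2}$.

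The main obstacle is the reverse inclusion $\supseteq$: I must show that every $g\in L^{2}(X,\mu)$ vanishing a.e. on $B$ is an $L^{2}$-limit of continuous functions vanishing on $B$. I would argue by approximation, reducing by regularity of $\mu$ to approximating $\chi_{K}$ for compact $K\subseteq X\setminus B$; choosing an open $U$ with $K\subseteq U\subseteq\overline{U}\subseteq X\setminus B$ and $\mu(U\setminus K)$ small, Urysohn's lemma produces $h\in C(X)$ with $\chi_{K}\le h\le 1$, $\operatorname{supp}h\subseteq\overline{U}$ (so $h\in I$), and $\|h-\chi_{K}\|_{L^{2}}^{2}\le\mu(U\setminus K)$ small. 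Granting $\xi_{I}^{\perp}=\chi_{B}$, the definition of $\varphi_{I}$ in the Construction yields, for $a\in C(X)$,
\[
\varphi_{I}[a]=\i<\xi_{I}^{\perp},a_{\varphi}>=\int_{X}\overline{\chi_{B}}\,a \ d\mu=\int_{B}a \ d\mu=\int_{B}a|_{B} \ d\mu_{B}=\varphi_{\mu_{B}}(a|_{B}),
\]
with $\mu_{B}$ the restriction of $\mu$ to $B$. Hence the restriction isomorphism $[a]\mapsto a|_{B}$ intertwines $\varphi_{I}$ with $\varphi_{\mu_{B}}$ and is therefore measure-preserving, giving the desired isomorphism $(A/I,\varphi_{I})\cong(C(B),\varphi_{\mu_{B}})$ in $\QMS_{\sharp}$.
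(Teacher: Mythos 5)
Your proposal is correct, and it reaches the conclusion by a genuinely more concrete route than the paper. The paper works functorially: it defines $B$ as the Gelfand spectrum of $A/I$, lets the equivalence $C$ of Theorem \ref{main1} produce a continuous map $j : B \to X$ dual to the projection $A \to A/I$, proves that $j$ is a closed embedding via Urysohn's lemma, and then transfers the measure identification through a commutative diagram, delegating the actual equality of measures to Example \ref{C} --- where the assertion that $C(X)/I_B \to C(B)$ is measure-preserving is stated rather than computed. You instead realize $B$ inside $X$ from the outset as the hull of $I$ (the hull--kernel correspondence for closed ideals of $C(X)$), use Tietze extension to get surjectivity of the restriction map --- which in passing upgrades the injection of Example \ref{C} to the isomorphism that the paper's proof actually needs --- and, most substantively, you carry out the GNS computation that the paper leaves implicit: identifying the $L^2$-closure of $I$ as $\{g \in L^2(X,\mu) \mid g = 0 \ \mu\text{-a.e. on } B\}$ via inner/outer regularity plus Urysohn, so that $\xi_I^{\perp} = \chi_B$ and $\varphi_I[a] = \int_B a \, d\mu = \varphi_{\mu_B}(a|_B)$. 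What the paper's route buys is brevity, given the categorical machinery of Theorem \ref{main1} already in place; what yours buys is self-containedness, since it proves the analytic core (the unproved content of Example \ref{C}) rather than citing it. The one step you and the paper both treat lightly is the initial reduction to the concrete model $(C(X),\varphi_{\mu})$: that a measure-preserving $*$-isomorphism induces an isomorphism between the resulting quantum conditional measures requires a (routine) check that the GNS constructions and the orthogonal decompositions correspond, which is exactly the content of the paper's claim that $\tilde{\alpha}$ preserves measures because $\alpha$ does.
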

\begin{proof}
Let $B$ denote the compact Hausdorff space associated with the $C^*$-algebra $A/I$ via Gelfand duality.
The equivalence of categories $C$ assigns a continuous map $j : B \to X$ such that $j^*=C(f) : C(X) \to C(B)$ corresponds to the projection $A \to A/I$. In particular, $j^*$ is a surjection.
To show the injectivity of $j$, suppose that $j(a)=j(b)$ for $a, b \in B$. If $a \neq b$, we can choose a continuous function $f$ on $B$ satisfying $f(a)=1$ and $f(b)=0$ by Urysohn's lemma.
Since $j^*$ is a surjection, there exists $\tilde{f} : X \to \bC$ such that $\tilde{f} \circ j =f$.
However, $f(a) = \tilde{f}(j(a)) =\tilde{f}(j(b)) =f(b)$ contradicts the choice of function $f$.
Hence, $j$ is injective and $B$ can be regarded as a closed subspace of $X$.
\[
\xymatrix{
0 \ar[r] & I \ar[r] \ar[d]^{\cong} & A \ar[r] \ar[d]^{\cong}_{\alpha} & A/I \ar[r]  \ar@{..>}[d]^{\cong}_{\tilde{\alpha}}  & 0 \\
0 \ar[r] & I_B \ar[r]  & C(X) \ar[r]  & C(X)/I_B  \ar[r] & 0. \\
}
\]
In the above commutative diagram, $\tilde{\alpha}$ preserves measures with respect to $\varphi_I$ and $(\varphi_{\mu})_{I_B}$ since $\alpha$ preserves measures with respect to $\varphi$ and $\varphi_{\mu}$.
From Example \ref{C}, we have
\[
(A/I, \varphi_I) \cong (C(X)/I_B, (\varphi_{\mu})_{I_B}) \cong (C(B),\varphi_{\mu_{B}})
\]
in $\QMS_{\sharp}$.
\end{proof}

We have seen some commutative cases of quantum conditional measures derived from classical conditional measures. On the other hand, the non-commutative case is quite different from the above commutative cases.

\begin{remark}\label{simple}
A simple algebra does not have any proper two-sided ideal.
Hence, a quantum measure on a simple algebra has no (non-trivial) conditional measure.
For example, the trace measure on the $n$-th matrix algebra $M_{n}(\bC)$ has no quantum conditional measure.
\end{remark}

\begin{example}
Let $\cB(H)$ be the algebra of bounded linear operators on a separable infinite-dimensional Hilbert space $H$ with the vector measure $w_{h} : \cB(H) \to \bC$ for $h \in H$.
The subset $\cK(H)$ of compact operators on $H$ forms an ideal of $\cB(H)$. The quotient algebra
$\cC(H)=\cB(H)/\cK(H)$ is called the Calkin algebra. We have the quantum conditional measure $(w_{h})_{\cK(H)}$ on $\cC(H)$. The Calkin algebra is simple; hence, Remark \ref{simple} implies that we cannot update $(w_{h})_{\cK(H)}$ anymore.
\end{example}

\begin{remark}
The classical Bayes' rule relates a probability $P$ and the conditional probability $P(-|B)$ for a subspace $B$ with $P(B) \neq 0$. Here, we can describe the conditional probability as the normalization of the conditional measure. Let us consider this situation in quantum probability spaces.

Given a quantum probability space $(A,\varphi)$ and a proper ideal $I$ of $A$,
let $\varphi(-|I)$ denote the normalized state on $A/I$ for the quantum conditional measure $\varphi_I$. Let us express the ratio of $\varphi_I$ and $\varphi$ as $\varphi(I|a)=\varphi_I[a]/\varphi(a)$ for $\varphi(a) \neq 0$. Then, for $a \in A$, we have the formula
\[
\varphi([a]|I) = \dfrac{\varphi(I|a)}{\varphi_I[e]}\varphi(a)
\]
as an analog of Bayes' rule. Obviously, when $A=L^{\infty}(\Omega)$ and $I=L^{\infty}(B^c)$ for some classical probability space $(\Omega,\cF,P)$ with a subspace $B \in \cF$, the above equality represents the classical Bayes' rule by applying it to the characteristic function $\chi(A)$:
\[
P(A|B) = \dfrac{P(B|A)}{P(B)}P(A).
\]
\end{remark}

\end{document}